\documentclass[11pt, dpaper, reqno, english]{amsart}
\usepackage{amsmath} 
\usepackage{amsthm} 
\usepackage{amssymb} 
\usepackage[dvipsnames]{xcolor}
\usepackage{amscd} 
\usepackage{mathtools}

\usepackage{subcaption}

\usepackage{array} 
\usepackage{mlmodern}
\usepackage[cal=boondoxo,scr=euler]{mathalfa}
\usepackage[backref=page,linktocpage]{hyperref} 
\usepackage{cleveref} 
\usepackage{caption} %
\usepackage{graphics,graphicx} 
\usepackage{tikz,tikz-cd} 

\usetikzlibrary{graphs,graphs.standard,calc}

\usetikzlibrary{decorations.pathreplacing,}

\usepackage[shortlabels]{enumitem} 

\DeclareMathAlphabet{\mathsf}{OT1}{\sfdefault}{m}{n}

\newcommand{\nocontentsline}[3]{}
\newcommand{\tocless}[2]{\bgroup\let\addcontentsline=\nocontentsline#1{#2}\egroup}

\makeatletter
\@ifpackageloaded{newtxmath}{%
\renewcommand{\lambda}{\uplambda}%
\renewcommand{\zeta}{\upzeta}
\renewcommand{\xi}{\upxi}
\renewcommand{\tau}{\uptau}
\renewcommand{\alpha}{\upalpha}
}{
\usepackage{mlmodern}
}
\makeatother

\usepackage[margin=1.25in]{geometry}
\linespread{1.04}
\setlength\parindent{0.15in}

\usepackage{verbatim}

\usepackage{scalerel}

\makeatletter
\def\dual#1{\expandafter\dual@aux#1\@nil}
\def\dual@aux#1/#2\@nil{\begin{tabular}{@{}c@{}}#1\\#2\end{tabular}}
\makeatother

\makeatletter
\@namedef{subjclassname@2020}{\textup{2020} Mathematics Subject Classification}
\makeatother

\DeclareMathAlphabet{\amathbb}{U}{bbold}{m}{n}

\hypersetup{
    colorlinks = true,
    linkbordercolor = {white},
    linkcolor = {BrickRed},
    anchorcolor = {black},
    citecolor = {BrickRed},
    filecolor = {cyan},
    menucolor = {BrickRed},
    runcolor = {cyan},
    urlcolor = {black}
}

\usetikzlibrary{automata}
\usepackage{bbm}

\newtheoremstyle{teoremas}
{11pt}
{11pt}
{\itshape}
{}
{\bfseries}
{}
{.5em}
{}

\theoremstyle{teoremas}
\newtheorem{theorem}{Theorem}[section]

\newtheorem{lemma}[theorem]{Lemma}
\newtheorem{proposition}[theorem]{Proposition}

\newtheoremstyle{definition}
{11pt}
{11pt}
{}
{}
{\bfseries}
{}
{.5em}
{}

\theoremstyle{definition}
\newtheorem{definition}[theorem]{Definition}

\newtheorem{problem}[theorem]{Problem}
\newtheorem{question}[theorem]{Question}
\newtheorem{example}[theorem]{Example}
\newtheorem{remark}[theorem]{Remark}

\crefname{theorem}{theorem}{theorems}
\Crefname{theorem}{Theorem}{Theorems}
\crefname{lemma}{lemma}{lemmas}
\Crefname{lemma}{Lemma}{Lemmas}
\crefname{proposition}{proposition}{propositions}
\Crefname{proposition}{Proposition}{Propositions}

\DeclareMathOperator{\rk}{rk}

\newcommand{\M}{\mathsf{M}}
\newcommand{\rev}{\operatorname{rev}}

\renewcommand{\H}{\mathrm{H}}

\newcommand{\Int}{\operatorname{Int}}

\AtBeginDocument{%
   \def\MR#1{}
}

\title{Eulerian posets and $Z$-polynomials}

\author[L.~Ferroni]{Luis~Ferroni}

\address{(L.~Ferroni)
  Dipartimento di Matematica, Universit\`a di Pisa, Pisa, Italy.
}
\email{luis.ferroni@unipi.it}

\author[R.~Riccardi]{Roberto Riccardi}

\address{(R.~Riccardi)
    Scuola Normale Superiore di Pisa, Pisa, Italy
}
\email{roberto.riccardi@sns.it}

\thanks{The first part of this project was carried out when Luis Ferroni was a Minerva Research Fellow at the Institute for Advanced Study.}
\subjclass[2020]{Primary: 52B20, 05E99, 14N10, 14F43.}

\allowdisplaybreaks

\begin{document}

\begin{abstract}
Let $P$ be a finite partially ordered set. In a recent series of works, Proudfoot introduced the notion of $Z$-polynomials associated with $P$-kernels, providing a unified framework for various intersection cohomology Poincar\'e polynomials arising in diverse areas of mathematics. One of the problems posed by Proudfoot was to interpret the $Z$-polynomial in a fundamental setting---namely, when $P$ is the lattice of faces of a convex polytope (or, more generally, an Eulerian poset). We resolve this problem by proving that the $Z$-polynomial of any Eulerian poset coincides with the toric $h$-polynomial of the poset of all (possibly empty) closed intervals of $P$, ordered by reverse inclusion. Under suitable polyhedral conditions, this result identifies the $Z$-polynomial of a polytope with the Poincar\'e polynomial of the intersection cohomology of an associated auxiliary polytope. We prove some results about the Chow polynomials of the poset of intervals of an Eulerian poset and relate them with the Veronese transforms on polynomials.
\end{abstract}

\maketitle

\section{Introduction}

In his seminal work~\cite{stanley-local}, Stanley introduced a powerful framework to investigate polynomial invariants naturally associated with  partially ordered sets and special incidence algebra elements called kernels. This construction provides a unified setting encompassing several, at first sight unrelated, theories that play a central role across mathematics. The three most representative instances are:
\begin{enumerate}[(i)]
\item The geometry and representation theory of Bruhat intervals of Weyl (or, more generally, Coxeter) groups \cite{kazhdan-lusztig,elias-williamson}, and references therein.
\item The enumeration of points, lines, planes, etc. in a hyperplane arrangement (or, more generally, a matroid) \cite{huh-wang,braden-huh-matherne-proudfoot-wang}.
\item The enumeration of (flags of) faces of rational convex polytopes \cite{stanley-toric-g,stanley-local} (or, more generally, arbitrary polytopes \cite{karu04}, regular CW-spheres and Cohen--Macaulay Eulerian posets \cite{karu}, or arbitrary posets \cite{bayer-ehrenborg}).

\end{enumerate}
This framework is often referred to as the Kazhdan–Lusztig–Stanley (KLS) theory of posets. For a comprehensive overview and an exposition of its algebro-geometric aspects, we refer the reader to the survey of Proudfoot~\cite{proudfoot-kls}. In what follows, we briefly recall the general setup of the KLS theory, adhering closely to the notation of~\cite{ferroni-matherne-vecchi}.

To each partially ordered set $P$ along with a $P$-kernel, one may associate two special incidence algebra elements called, respectively, the left and right \emph{Kazhdan--Lusztig--Stanley functions}. A third object, recently introduced by Proudfoot in the context of KLS theory \cite{proudfoot-kls}, is the so-called $Z$-function.

For the sake of having simpler statements, we define the \emph{$Z$-polynomial} of an Eulerian poset with minimum element $\widehat{0}$ and maximum element $\widehat{1}$ as the value that its $Z$-function assumes at the interval $[\widehat{0},\widehat{1}]$. Similarly, the \emph{right} (resp. \emph{left}) \emph{Kazhdan--Lusztig polynomials} are the polynomials that the right (resp. left) Kazhdan--Lusztig function associates to the interval $[\widehat{0},\widehat{1}]$.

In the three examples above, using adequate kernels, one has:
\begin{enumerate}[(i)]
    \item The right Kazhdan--Lusztig--Stanley polynomial of a Bruhat interval $[v,w]$ in a Weyl group $W$ (often just called the Kazhdan--Lusztig polynomial of $[v,w]$) encodes the local intersection cohomology Betti numbers of the Schubert variety $X_w$ at the point $e_v$ (see \cite[Corollary~4.8]{kazhdan-lusztig2}). The $Z$-polynomial, in turn, encodes the intersection cohomology Betti numbers of the Richardson variety $\overline{C}_v\cap \overline{X}_w$ (see \cite[Theorem~4.3]{proudfoot-kls}).
    \item The right Kazhdan--Lusztig--Stanley polynomial of (the lattice of flats of) a loopless matroid $\M$ encodes the Betti numbers of the stalk at the empty flat of the intersection cohomology module of $\M$ (see \cite{elias-proudfoot-wakefield,braden-huh-matherne-proudfoot-wang}). The $Z$-function encodes the Betti numbers of the intersection cohomology module of $\M$ (see \cite{proudfoot-xu-young,braden-huh-matherne-proudfoot-wang}).
    \item The left Kazhdan--Lusztig--Stanley polynomial of the face poset of a rational (resp. arbitrary) convex polytope, often called the \emph{toric $g$-polynomial}, encodes the Betti numbers of the (middle perversity) intersection cohomology of the toric variety associated to the polytope \cite{stanley-toric-g} (resp. the \emph{combinatorial} intersection cohomology \cite{barthel-brasselet-fieseler-kaup,bressler-lunts,karu04}).
\end{enumerate}

In the first and third examples above, the left KLS function equals the right KLS function of the dual poset, and both the class of Bruhat intervals and Eulerian posets is closed under duality. Therefore, there is no epistemological difference between left and right in these cases. In the second case, the left KLS function is trivial, whereas the right KLS function is very subtle. The absence of any mention of the $Z$-function in the third item above is the ultimate motivation in this paper. This problem was raised by Proudfoot in his survey \cite{proudfoot-kls} (see the last paragraph of Section~1.1 therein).

\begin{problem}[Proudfoot]
    Find an interpretation for the $Z$-function of the face poset of a convex polytope.
\end{problem}

Very little is known about $Z$-functions of polytopes, not even in small special cases. Unfortunately, the computation of $Z$-polynomials is a daunting task. Even for face posets of small dimensional polytopes the defining recursion for the $Z$-polynomial is very slow in practice. This is a difficult obstruction to circumvent if one desires to build some intuition about what these polynomials are encoding.

Despite these computational obstacles, in this paper we answer this question in a strong sense. Instead of just focusing on polytopes, we view this problem in the broader setting of general Eulerian posets. Counterintuitively, even if the combinatorics of the posets is much more complicated (for example, they no longer are atomic and coatomic lattices), the answer to the above question becomes surprisingly beautiful and concise.

\begin{theorem}\label{thm:main}
    Let $P$ be an Eulerian poset. The $Z$-polynomial of $P$ using the Eulerian kernel equals the toric $h$-polynomial of the poset of intervals of $P$ ordered by reverse inclusion.
\end{theorem}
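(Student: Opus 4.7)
The plan is to compare two expansions: both $h(\Int(P)^{\rev}; t)$ and $Z_P(t)$ can be written as sums indexed by intervals of $P$ whose summands involve toric $g$-polynomials of subintervals of $P$, and a careful matching of the two expressions yields the identity.

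I begin by making the combinatorial structure of $Q := \Int(P)^{\rev}$ explicit. Its elements are the closed intervals $[a,b]$ of $P$ together with the empty interval $\emptyset$; the minimum of $Q$ is $P = [\widehat{0}_P, \widehat{1}_P]$ and the maximum is $\emptyset$. The poset is graded with $\rk_Q([a,b]) = \rk_P(a) + \rk_P(\widehat{1}_P) - \rk_P(b)$ and $\rk_Q(\emptyset) = \rk_P(\widehat{1}_P) + 1$, so that the degree $\rk(Q)-1 = \rk_P(\widehat{1}_P)$ of $h(Q;t)$ already matches that of $Z_P(t)$. The two structural features I would exploit are
\[ [[c,d],[a,b]]_Q \cong [a,c]_P \times [d,b]_P \qquad (a \le c \le d \le b), \]
expressing non-empty closed intervals of $Q$ as products of intervals of $P$, and $[I,\emptyset]_Q \cong \Int(I)^{\rev}$ for each non-empty interval $I$ of $P$, which gives a recursive self-similarity. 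A direct M\"obius computation using these two facts also shows that $Q$ is Eulerian whenever $P$ is, and hence $h(Q;t)$ is palindromic.

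I would then proceed by induction on $\rk_P(\widehat{1}_P)$, with the base case of a point trivial. For the inductive step I would expand $h(Q;t)$ via Stanley's defining recursion as a sum over the elements $I$ of $Q$ of the toric $g$-polynomial of $[\widehat{0}_Q, I]_Q$ times a power of $(t-1)$, splitting the sum into contributions from non-empty intervals $[a,b]$ of $P$ — for which $[\widehat{0}_Q, [a,b]]_Q \cong [\widehat{0}_P, a]_P \times [b, \widehat{1}_P]_P$ factors as a product — and the single contribution from $\emptyset$, where $[\widehat{0}_Q, \emptyset]_Q = Q$ itself. The latter term is handled via the induction hypothesis applied to the subposets $\Int(I)^{\rev}$ for proper subintervals $I$ of $P$. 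In parallel, I would expand $Z_P(t)$ using Proudfoot's characterization of the $Z$-function specialized to the Eulerian kernel, under which the left and right KLS polynomials both coincide with the toric $g$-polynomial, rewriting $Z_P(t)$ as a sum over $x \in P$ of products of toric $g$-polynomials of $[\widehat{0}_P,x]_P$ and $[x,\widehat{1}_P]_P$ suitably twisted by a palindromic reflection. A term-by-term match of the two expansions then closes the induction.

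The main technical obstacle is the interaction between the toric $g$-polynomial and products of Eulerian posets. Unlike the ordinary $h$-polynomial of simplicial complexes, the toric $g$-polynomial is not multiplicative on products, so the expansion of the $h$-side above involves $g$-polynomials of product intervals $[\widehat{0}_P, a]_P \times [b, \widehat{1}_P]_P$, whereas the $Z$-side features separately $g([\widehat{0}_P, x]_P; t)$ and a reflection of $g([x, \widehat{1}_P]_P; t)$. Establishing the required identity — whether through an explicit product formula for $g$ on Eulerian posets or, more conceptually, through the uniqueness of the toric $h$-polynomial among palindromic polynomials satisfying Stanley's recursion — is where I expect the bulk of the work to lie.
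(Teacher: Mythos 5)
Your setup of $Q = \Int(P)^{\rev}$, the rank formulas, and the observation that every lower interval $[\widehat{0}_Q, [a,b]]_Q$ in $Q$ factors as a product of intervals of $P$ are all on target, and you have correctly identified that the crux is a product formula for the toric $g$-polynomial on these intervals. But the proposal stops precisely there, and the one place it commits itself it is mistaken. First, the ``main technical obstacle'' you name is not an obstacle: the toric $g$-polynomial \emph{is} multiplicative on products of Eulerian posets. This follows at once from the uniqueness statement in the KLS framework (the paper's Theorem~\ref{thm:kls-functions}), because the Eulerian kernel $\varepsilon$ of $P_1\times P_2$ factors as $\varepsilon^{(1)}\otimes\varepsilon^{(2)}$, the degree bound is additive, and the convolution over a product poset splits as a double sum — exactly the computation that the paper carries out in the proof of Theorem~\ref{ghatformula}. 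Second, your identification of the factor is off: for $[s,t]\subseteq[u,v]$ the interval $\bigl[[u,v],[s,t]\bigr]_{\widehat{P}}$ is isomorphic to $[u,s]_P \times \bigl([t,v]_P\bigr)^{*}$ (the second factor is \emph{dualized}, since $v$ moves downward to $t$), not to a product of two intervals carrying the same orientation. This is why the paper's formula reads $\widehat{g}_{[u,v],[s,t]} = g_{us}\cdot f_{tv}$, pairing a left KLS polynomial with a right one; your $Z$-side expansion $\sum_x g_{\widehat{0}x}^{\rev}\, f_{x\widehat{1}}$ and your $\widehat h$-side expansion will not match term by term unless this dualization is tracked. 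Concretely: what is missing is a proof of the product formula $\widehat{g}_{[u,v],[s,t]} = g_{us}\, f_{tv}$ (the paper's Theorem~\ref{ghatformula}), which the paper establishes by writing down the candidate element $z_{[u,v],[s,t]} := g_{us}f_{tv}$ of $\mathcal{I}(\widehat{P})$ and verifying the three characterizing properties of the left KLS function. Once that lemma is in hand, your proposed expansion of both sides goes through (and indeed needs no induction — the paper's Theorem 3.4 handles the $\varnothing$ term by a single ``add-and-subtract'' to complete a convolution and divide by $x-1$, rather than an appeal to an inductive hypothesis on subintervals).
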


In the above statement and in several places of the present paper we treat the empty set as a closed interval of $P$, so that the poset alluded to in Theorem~\ref{thm:main} has a maximum (see Figure~\ref{fig:posets} below for an example). A non-obvious, but classical theorem due to Lindstr\"om (see \cite{lindstrom}) that if $P$ is an Eulerian poset then the poset of all the intervals of $P$ ordered by reverse inclusion is Eulerian as well. The proof of this statement as well as the basics of posets of intervals are discussed in Section~\ref{sec:three}.

Recall that the \emph{toric $h$-polynomial} of an Eulerian poset $P$ is a ``symmetrization'' of the toric $g$-polynomial. When the poset $P$ is the lattice of faces of a simplicial polytope $\mathcal{P}$ the toric $h$-polynomial is the classical $h$-polynomial of $\mathcal{P}$, and when the polytope is rational but not necessarily simplicial, the coefficients of the toric $h$-polynomial are the (middle perversity) intersection cohomology Betti numbers of the toric variety associated to $\mathcal{P}$. Stanley \cite{stanley-toric-g} showed that the combinatorial recursion satisfied by the toric $h$-polynomial of the face poset of a polytope can be used to define it for arbitrary Eulerian posets (for a modern presentation, see \cite[p.~313]{stanley-ec1}). More generally, Bayer and Ehrenborg \cite{bayer-ehrenborg} defined toric $g$- and toric $h$-polynomials for arbitrary graded posets.

The above theorem fails if the poset is not Eulerian---as mentioned in the above paragraph, \cite{bayer-ehrenborg} states a definition of toric $h$- and toric $g$-polynomials for arbitrary graded posets, thus allowing for a natural definition of $Z$-polynomials of graded posets by convolving toric $g$-polynomials and toric $g$-polynomials of poset duals (see Section~\ref{sec:bayer-ehr} for the details).

There is a geometric way of visualizing Theorem~\ref{thm:main} when $P$ is the face poset of certain polytopes that we describe next. We say that a $d$-dimensional polytope $\mathcal{P}\subseteq \mathbb{R}^d$ is \emph{admissible} if the origin lies in its relative interior and, for every face $F$ of $\mathcal{P}$, the orthogonal projection of the origin to the affine hull of $F$ lies in the relative interior of $F$. If $\mathcal{P}$ is admissible, we construct the following auxiliary polytope $\mathcal{Q}\subseteq \mathbb{R}^{d+1}$:
    \[ \mathcal{Q} = \text{convex hull} \left\{(\mathcal{P}\times \{0\}) \cup (\mathcal{P}^* \times \{1\})\right\},\]
where $\mathcal{P}^*$ is the polar dual of the polytope $\mathcal{P}$.
A theorem due to Broadie \cite{broadie} shows that if $\mathcal{P}$ is admissible, the face poset of $\mathcal{Q}$ is precisely the poset of all the intervals of $P$. Moreover, Broadie also showed that if $\mathcal{P}$ is not admissible, the facial structure of $\mathcal{Q}$ differs from the the one given by the intervals of $P$. When $\mathcal{P}$ is admissible, the polytope $\mathcal{Q}$ is often referred to as the \emph{geometric antiprism} of $\mathcal{P}$. From all this discussion, we have the following consequence of Theorem~\ref{thm:main}.

\begin{theorem}\label{thm:main-for-polytopes}
    Let $\mathcal{P}$ be an admissible polytope. Then, the $Z$-polynomial of the face poset of $\mathcal{P}$ encodes the intersection cohomology Betti numbers of the geometric antiprism of $\mathcal{P}$.
\end{theorem}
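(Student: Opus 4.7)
The plan is to obtain Theorem~\ref{thm:main-for-polytopes} as a direct corollary of Theorem~\ref{thm:main} by chaining it with the geometric realization of the interval poset provided by Broadie's theorem. Since the face poset $P$ of a convex polytope is Eulerian, Theorem~\ref{thm:main} applies and identifies the $Z$-polynomial of $P$ (with respect to the Eulerian kernel) with the toric $h$-polynomial of the poset $\Int(P)$ of closed intervals of $P$ ordered by reverse inclusion, where the empty set is included as the maximum element.

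Next, I would invoke Broadie's theorem \cite{broadie}, already recalled in the excerpt: the admissibility hypothesis on $\mathcal{P}$ guarantees that the face poset of the geometric antiprism $\mathcal{Q} = \operatorname{conv}\{(\mathcal{P}\times\{0\}) \cup (\mathcal{P}^{*}\times\{1\})\}$ is precisely $\Int(P)$, under the natural correspondence in which the full interval $[\widehat 0,\widehat 1]$ matches the empty face and the empty interval matches the improper face $\mathcal{Q}$. Combining the two identifications, the $Z$-polynomial of the face poset of $\mathcal{P}$ equals the toric $h$-polynomial of $\mathcal{Q}$.

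To conclude, I would appeal to the standard geometric interpretation of the toric $h$-polynomial for convex polytopes: if $\mathcal{Q}$ can be realized as a rational polytope, Stanley's theorem \cite{stanley-toric-g} says that its toric $h$-polynomial is the Poincaré polynomial of the middle perversity intersection cohomology of the projective toric variety associated to $\mathcal{Q}$; in the general polyhedral case one appeals instead to the combinatorial intersection cohomology of fans developed in \cite{barthel-brasselet-fieseler-kaup,bressler-lunts,karu04}.

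The logical skeleton is entirely routine once Theorem~\ref{thm:main} is in hand; the genuine mathematical content has been absorbed into that result and into Broadie's identification of $\mathcal{Q}$. The main delicacy I foresee is purely expository: pinning down exactly what ``encodes'' means in the statement, i.e.\ being explicit about whether one is asserting equality with classical intersection cohomology Betti numbers (requiring rationality of $\mathcal{Q}$, which is not automatic from rationality of $\mathcal{P}$, because the polar dual depends on a choice of inner product) or with the combinatorial intersection cohomology Betti numbers, which are always defined. I would address this by splitting the statement accordingly, so that the strong geometric conclusion is proved under a rationality hypothesis and the combinatorial one holds unconditionally on admissible polytopes.
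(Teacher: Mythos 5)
Your proposal is correct and follows essentially the same route as the paper, which derives Theorem~\ref{thm:main-for-polytopes} directly from Theorem~\ref{thm:main} together with Broadie's identification of the face poset of the geometric antiprism $\mathcal{Q}$ with the poset of intervals of $P$, and then interprets the toric $h$-polynomial of $\mathcal{Q}$ via (combinatorial) intersection cohomology. Your closing remark distinguishing the rational case (classical intersection cohomology of the toric variety) from the general polyhedral case (combinatorial intersection cohomology in the sense of \cite{barthel-brasselet-fieseler-kaup,bressler-lunts,karu04}) is a sensible precision of what ``encodes'' means, consistent with how the paper treats the same dichotomy in its introduction.
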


We learned from a personal communication with Nicholas Proudfoot that a version of the above statement was independently conjectured by Kalle Karu. 

Unfortunately, Theorem~\ref{thm:main-for-polytopes} cannot be extended to the case of general polytopes in any reasonable way. It is tempting to believe that if $P$ is the face poset of a convex polytope, then so is the poset of all intervals of $P$. Unfortunately, this belief turns out to be incorrect---we include a more detailed discussion about this in Section~\ref{sec:polytopes}.

\smallskip

When studying the poset of intervals of an Eulerian poset, it comes natural to formulate questions about other invariants. Especially relevant in the context of KLS functions are the newly introduced \emph{Chow functions} defined in \cite{ferroni-matherne-vecchi}. They take their name due to the fact that in the case of hyperplane arrangements or matroids, they model the Hilbert--Poincar\'e series of a suitable Chow ring. 
In the Eulerian case, the Chow function encodes information about the number of chains of each size in the poset. 

We define the Chow polynomial of an Eulerian poset $P$ as the value that the Chow function assigns to the interval $[\widehat{0},\widehat{1]}$. We prove the following statement about the Chow polynomial of the poset of intervals of an Eulerian poset.

\begin{theorem}
    Let $P$ be an Eulerian poset of rank $r$. Then, the Chow polynomial of the poset of intervals of $P$ equals the second Veronese transform of the Chow polynomial of $P$. In particular, the Chow polynomial of $P$ determines the Chow polynomial of the poset of intervals of $P$ and viceversa.
\end{theorem}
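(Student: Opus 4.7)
The plan is to exploit the product-of-intervals structure of $\Int(P)$, together with the multiplicativity of the toric $g$-polynomial under direct products of Eulerian posets. Concretely, for $[a,b], [c,d] \in \Int(P)$ with $[a,b] \leq [c,d]$ in reverse inclusion---i.e., $[c,d] \subseteq [a,b]$, equivalently $a \leq c \leq d \leq b$ in $P$---the closed interval from $[a,b]$ to $[c,d]$ inside $\Int(P)$ is isomorphic to $[a,c] \times [d,b]$, so its toric $g$-polynomial factors as $g_{[a,c]}(t)\cdot g_{[d,b]}(t)$. The remaining closed intervals of $\Int(P)$, namely those containing the top element $\emptyset$, are of the form $\Int([a,b])$ for some closed interval $[a,b]$ of $P$, and are handled recursively.

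First, I would invoke the formula from \cite{ferroni-matherne-vecchi} expressing the Chow polynomial of an Eulerian poset as a convolution involving the toric $g$-polynomials of its subintervals, and apply it to $\Int(P)$. Combined with the product decomposition above, this expands $\underline{H}_{\Int(P)}(t)$ as a double sum indexed by pairs of nested intervals in $P$, with each summand a product of toric $g$-polynomials of intervals of $P$.

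Second, I would compare this double sum with the single-sum formula for $\underline{H}_P(t)$ and verify the defining identity of the second Veronese transform, namely
\[
\underline{H}_{\Int(P)}(t^2) \;=\; \tfrac{1}{2}\bigl(\underline{H}_P(t) + \underline{H}_P(-t)\bigr).
\]
The forward direction of ``determines'' is then immediate; the converse follows from a palindromicity property of the Chow polynomial in the Eulerian setting, which pairs odd-degree coefficients with even-degree ones so that the even part of $\underline{H}_P$ is sufficient to reconstruct the full polynomial.

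The main obstacle will be the bookkeeping: matching powers of $t$ on both sides, correctly treating the closed intervals of $\Int(P)$ that involve the extra top element $\emptyset$ (and therefore are not genuine products), and keeping track of signs so that the even part is extracted precisely. A possibly smoother alternative route is to invoke \Cref{thm:main} to translate the statement into one about $Z$-polynomials, and combine it with known identities relating the Chow, $Z$, and toric $h$-polynomials to derive the Veronese identity through a direct manipulation of generating functions rather than through the convolution formula.
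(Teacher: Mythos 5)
The proposal goes wrong at a fundamental point: the identity you cite as ``the defining identity of the second Veronese transform,''
\[
\underline{H}_{\Int(P)}(t^2) \;=\; \tfrac{1}{2}\bigl(\underline{H}_P(t) + \underline{H}_P(-t)\bigr),
\]
is not the Veronese transform at all---it is just extraction of the even part of the polynomial $\underline{H}_P$, which is a different operation. The second Veronese transform is defined via the rational generating function: if $\underline{H}_P(x)/(1-x)^{r+1} = \sum_n a_n x^n$, then the transformed polynomial $h'(x)$ is characterized by $h'(x)/(1-x)^{r+1} = \sum_n a_{2n}\,x^n$, which unwinds to
\[
h'(x^2) \;=\; \tfrac{1}{2}\Bigl[(1+x)^{r+1}\,\underline{H}_P(x) + (1-x)^{r+1}\,\underline{H}_P(-x)\Bigr],
\]
with nontrivial binomial weights coming from the denominator. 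A quick sanity check kills the simpler identity: for $P$ the rank-$2$ Boolean lattice, $\underline{H}_P(x)=1+x$ and $\underline{H}_{\widehat{P}}(x)=1+6x+x^2$ (the $h$-polynomial of an $8$-cycle), whereas the even part of $1+x$ is just $1$. So the target you set yourself in step two is the wrong target, and the sum over pairs of nested intervals you would assemble in step one has no reason to match it.

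Beyond the Veronese issue, the bookkeeping you flag as ``the main obstacle'' is genuinely harder than you suggest. The intervals of $\widehat{P}$ containing the top element $\varnothing$ are not products of intervals of $P$, so the convolution sum you propose does not factor cleanly into pairs; and there is no ready-made convolution formula expressing the Chow polynomial directly through toric $g$-polynomials of subintervals. The paper sidesteps all of this by working with zeta polynomials rather than $g$-polynomials: it observes that $\zeta_{\widehat{P}}(n) = \sum_{j=0}^{n-1}\zeta_P(2j+1)$ (a multichain of intervals in $\widehat{P}$ is a prefix of $\varnothing$'s followed by a nested chain of closed intervals, which unfolds into a multichain of even length in $P$), and combines this with the known identity $\sum_{n\ge 0}\zeta_Q(n+1)x^n = \underline{H}_Q(x)/(1-x)^{\rk Q+1}$ from \cite{ferroni-matherne-vecchi}. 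This immediately produces the Veronese transform in its correct form, with no need to track products of $g$-polynomials or truncate degrees. I'd recommend abandoning the $g$-polynomial convolution route and using the zeta polynomial as the central object; your alternative suggestion of going through Theorem~\ref{thm:main} also does not apply, since that theorem concerns the $Z$-polynomial and toric $h$-polynomial, not the Chow polynomial.
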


(All the Chow polynomials alluded to in the last statement are \emph{Eulerian} Chow polynomials, i.e., Chow polynomials arising from the Eulerian kernel). For the precise definition of Veronese transforms, we refer the reader to the content of Section~\ref{sec:chow}. 

Finally, we include one last section in this article: Section~\ref{sec:five} contains several remarks and a few open questions that we consider relevant.

\section{Preliminaries}

\subsection{Incidence algebras and KLS functions}\label{sec:posetnotions}

Throughout this paper we will use the letter $P$ to denote a bounded partially ordered set. The minimum element of $P$ will be customarily denoted by $\widehat{0}$ whereas the maximum will be denoted by $\widehat{1}$. We say that $P$ is \emph{graded} if all the maximal chains from $\widehat{0}$ to $\widehat{1}$ in $P$ have the same length. This implies the existence of a map $\rho:P\to \mathbb{Z}_{\geq 0}$ called \emph{the rank function} of $P$, where $\rho(s)$ records $-1$ plus the size of any maximal chain starting at $\widehat{0}$ and ending at $s$. We will customarily write $\rho_{st}$ to denote $\rho(t) - \rho(s)$ for $s\leq t$.

The \emph{incidence algebra} of $P$, denoted by $\mathcal{I}(P)$, is the free $\mathbb{Z}[x]$-module spanned by $\Int(P)$, the set of all closed intervals of $P$. In other words, an element $a\in \mathcal{I}(P)$ associates to each closed interval $[s,t]\in \Int(P)$ a polynomial $a_{st}(x)\in \mathbb{Z}[x]$. When we need to specify the variable $x$, we will write $a_{st}(x)$. The product (also known as convolution) of two elements $a,b\in \mathcal{I}(P)$ is defined via
    \[ (ab)_{st} = \sum_{s\leq w\leq t} a_{sw}\, b_{wt} \qquad \text{ for every $s\leq t$ in $P$}.\]
This product operation makes $\mathcal{I}(P)$ into an associative algebra, having an identity $\delta$, given by $\delta_{st} = 0$ for $s < t$ and $\delta_{ss} = 1$ for all $s\in P$. The $\zeta$-function of $P$ is the element $\zeta\in \mathcal{I}(P)$ such that $\zeta_{st} = 1$ for all $s\leq t$. The M\"obius function of $P$ is the element $\mu =\zeta^{-1}$. 

We consider a subalgebra $\mathcal{I}_{\rho}(P)$ given by
    \[ \mathcal{I}_{\rho}(P) := \left\{ a \in \mathcal{I}(P) : \deg a_{st} \leq \rho_{st} \text{ for all $s\leq t$}\right\}.\]
There is an involution $\mathcal{I}_{\rho}(P) \to \mathcal{I}_{\rho}(P)$ that we denote by $a\mapsto a^{\rev}$, and is defined by the following equality:
    \[ (a^{\rev}_{st})(x) = x^{\rho_{st}} a_{st}(x^{-1}).\]
A $P$-kernel is an element $\kappa\in \mathcal{I}_{\rho}(P)$ such that $\kappa_{ss}=1$ for all $s\in P$ and $\kappa^{\rev} = \kappa^{-1}$. 

\begin{theorem}\label{thm:kls-functions}
    Let $\kappa\in \mathcal{I}_{\rho}(P)$ be a $P$-kernel. There exists a unique element $f\in \mathcal{I}_{\rho}(P)$ satisfying the following properties:
    \begin{enumerate}[\normalfont(i)]
        \item \label{it:f-i}$f_{ss}(x) = 1$ for all $s\in P$.
        \item \label{it:f-ii} $\deg f_{st}(x) < \frac{1}{2} \rho_{st}$ for all $s < t$.
        \item \label{it:f-iii} $f^{\rev} = \kappa\cdot f$.
    \end{enumerate}
    Similarly, there exists a unique element $g\in \mathcal{I}_{\rho}(P)$ satisfying the following properties:
    \begin{enumerate}[\normalfont(i')]
        \item \label{it:g-i} $g_{ss}(x) = 1$ for all $s\in P$.
        \item \label{it:g-ii} $\deg g_{st}(x) < \frac{1}{2} \rho_{st}$ for all $s < t$.
        \item \label{it:g-iii} $g^{\rev} = g \cdot \kappa$.
    \end{enumerate}
\end{theorem}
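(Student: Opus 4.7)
The plan is to prove existence and uniqueness of $f$ simultaneously by induction on $\rho_{st}$, and then to deduce the analogous statement for $g$ either by running a mirror argument or by applying the statement for $f$ to the dual poset $P^{\mathrm{op}}$ with the induced kernel. The base case is trivial: when $\rho_{st}=0$ one has $s=t$, and condition~\ref{it:f-i} forces $f_{ss}=1$, which is consistent with $f^{\rev}_{ss}=1=\kappa_{ss}\cdot f_{ss}$.

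For the inductive step, fix $s<t$ and assume that $f_{wv}$ has been uniquely determined for every closed interval $[w,v]\subset[s,t]$ with $\rho_{wv}<\rho_{st}$, and that $f^{\rev}_{wv}=(\kappa\cdot f)_{wv}$ holds for those intervals. The relation $f^{\rev}_{st}=(\kappa\cdot f)_{st}$ expands, using $\kappa_{ss}=1$, to
\[
f^{\rev}_{st}-f_{st}\;=\;F_{st}, \qquad F_{st}:=\sum_{s<w\le t}\kappa_{sw}\,f_{wt},
\]
where the right-hand side $F_{st}$ is completely determined by the inductive data. I would then reduce the problem to the following two independent sub-steps.

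The first and most delicate sub-step is to verify the antisymmetry $F^{\rev}_{st}=-F_{st}$. Using $\rho_{sw}+\rho_{wt}=\rho_{st}$ one computes
\[
F^{\rev}_{st}=\sum_{s<w\le t}\kappa^{\rev}_{sw}\,f^{\rev}_{wt},
\]
and then substitutes $\kappa^{\rev}=\kappa^{-1}$ together with the inductive identity $f^{\rev}_{wt}=(\kappa\cdot f)_{wt}$. Expanding the convolution and swapping the order of summation transforms the expression into $\sum_{s<u\le t}f_{ut}\bigl(\sum_{s<w\le u}\kappa^{-1}_{sw}\kappa_{wu}\bigr)$, and the inner sum equals $\delta_{su}-\kappa_{su}$ because the full sum over $s\le w\le u$ collapses to $\delta_{su}$. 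Since $\delta_{su}=0$ for $s<u$, what survives is precisely $-F_{st}$. This is the step I expect to be the main obstacle, since it is the only place where the defining property $\kappa^{\rev}=\kappa^{-1}$ of a kernel is actually used, and the bookkeeping must cleanly cancel the diagonal contribution at $w=s$.

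The second sub-step is purely linear-algebraic. Given any polynomial $F_{st}(x)$ of degree at most $\rho_{st}$ satisfying $F^{\rev}_{st}=-F_{st}$, I would show that there is a unique polynomial $f_{st}(x)$ with $\deg f_{st}<\tfrac12\rho_{st}$ such that $f^{\rev}_{st}-f_{st}=F_{st}$. Writing $F_{st}=\sum_i q_i x^i$ and $f_{st}=\sum_{i<\rho_{st}/2} f_i x^i$, the antisymmetry of $F_{st}$ forces $q_{\rho_{st}/2}=0$ when $\rho_{st}$ is even and $q_{\rho_{st}-i}=-q_i$, so matching coefficients of $x^i$ for $i<\tfrac12\rho_{st}$ yields $f_i=-q_i$, while the coefficients in the upper half are then automatically correct by the antisymmetry. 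This produces $f_{st}$, and the matching shows uniqueness. Condition~\ref{it:f-ii} is built into the construction, and with this $f_{st}$ the identity~\ref{it:f-iii} extends to $[s,t]$, completing the induction. The statement for $g$ follows by the same argument with the roles of left and right multiplication exchanged (taking $G_{st}:=\sum_{s\le w<t}g_{sw}\kappa_{wt}$), or equivalently by transporting the result for $f$ along the anti-isomorphism $\mathcal{I}_{\rho}(P)\to\mathcal{I}_{\rho}(P^{\mathrm{op}})$ sending $\kappa$ to its opposite kernel.
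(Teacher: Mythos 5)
Your argument is correct and is essentially the standard inductive proof: the paper itself does not prove Theorem~\ref{thm:kls-functions} but refers to \cite[Theorem~2.2]{proudfoot-kls}, whose proof proceeds exactly as you do — induction on $\rho_{st}$, reduction to solving $f_{st}^{\rev}-f_{st}=F_{st}$ with $F_{st}=\sum_{s<w\le t}\kappa_{sw}f_{wt}$, verification that $F_{st}^{\rev}=-F_{st}$ via $\kappa^{\rev}=\kappa^{-1}$ and the inductive identity $f^{\rev}_{wt}=(\kappa f)_{wt}$, and the elementary observation that an antisymmetric polynomial of degree at most $\rho_{st}$ is uniquely of the form $f^{\rev}_{st}-f_{st}$ with $\deg f_{st}<\tfrac12\rho_{st}$. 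The deduction for $g$ by duality (or the symmetric argument with $G_{st}=\sum_{s\le w<t}g_{sw}\kappa_{wt}$) is likewise standard.
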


Following \cite[Section~2]{proudfoot-kls}, we refer to $f$ (respectively $g$) as the \emph{right} (respectively \emph{left}) Kazhdan--Lusztig--Stanley (KLS) function associated with $\kappa$. We further define
\[
f_P(x) := f_{\widehat{0}\,\widehat{1}}(x) \quad \text{and} \quad g_P(x) := g_{\widehat{0}\,\widehat{1}}(x),
\]
and call them the \emph{right} and \emph{left} \emph{Kazhdan--Lusztig--Stanley (KLS) polynomials} of $P$, respectively.

A complete proof of the preceding theorem can be found in \cite[Theorem~2.2]{proudfoot-kls}. Another fundamental object in the KLS theory is the \emph{$Z$-function}, extensively studied by Proudfoot in \cite{proudfoot-kls}.

\begin{definition}\label{def:zeta-functions}
Let $\kappa \in \mathcal{I}_{\rho}(P)$ be a $P$-kernel, and let $f$ (respectively $g$) denote its right (respectively left) KLS function.
The \emph{$Z$-function} associated with $\kappa$ is the element $Z \in \mathcal{I}_{\rho}(P)$ defined by
\[
Z := g^{\rev} f = g f^{\rev}.
\]
\end{definition}

The equality between these two expressions follows from the fact that $Z = g \kappa f$.
The $Z$-function is symmetric, meaning $Z^{\rev} = Z$, or equivalently,
\[
Z_{st}(x) = x^{\rho_{st}} Z_{st}(x^{-1})
\quad \text{for all } s \le t.
\]

\subsection{Eulerian Posets}

We will assume that the reader is acquainted with the basic properties of Eulerian posets, and we refer to \cite[Section~3.16]{stanley-ec1} for any undefined terminology. However, we make a brief recapitulation of some essential notions that we will use.

A poset $P$ is said to be \emph{Eulerian} if the M\"obius function satisfies $\mu_{st} = (-1)^{\rho(t) - \rho(s)}$ for every $s\leq t$ in $P$. An equivalent way of stating this property consists in saying that $P$ is Eulerian if and only if every nontrivial interval $[s,t]$ contains the same number of elements of odd rank and even rank. Famous examples of Eulerian posets include: face posets of convex polytopes, cell posets of regular CW-spheres, and Bruhat intervals of Coxeter groups.

The following provides a characterization of Eulerian posets in terms of kernels.

\begin{proposition}[{\cite[Proposition~7.1]{stanley-local}}]
    Let $P$ a finite graded bounded poset. Then $P$ is Eulerian if and only if the element $\varepsilon\in \mathcal{I}_{\rho}(P)$ given by $\varepsilon_{st}(x) = (x-1)^{\rho_{st}}$ is a $P$-kernel.
\end{proposition}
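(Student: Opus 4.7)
The plan is to verify the two $P$-kernel axioms for $\varepsilon$ directly. The conditions $\varepsilon_{ss}(x)=1$ and $\varepsilon\in \mathcal{I}_\rho(P)$ are immediate from $(x-1)^0=1$ and $\deg(x-1)^{\rho_{st}}=\rho_{st}$, so the entire content of the statement reduces to checking $\varepsilon\cdot \varepsilon^{\rev}=\delta$ and showing that this identity holds if and only if $P$ is Eulerian.

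First I would compute $\varepsilon^{\rev}$ from the definition: since $x^{\rho_{st}}(x^{-1}-1)^{\rho_{st}}=(1-x)^{\rho_{st}}=(-1)^{\rho_{st}}(x-1)^{\rho_{st}}$, we obtain $\varepsilon^{\rev}_{st}(x)=(-1)^{\rho_{st}}(x-1)^{\rho_{st}}$. Using $\rho_{sw}+\rho_{wt}=\rho_{st}$ to pull out a common factor, the convolution becomes
\[
(\varepsilon\cdot\varepsilon^{\rev})_{st}(x) \;=\; (x-1)^{\rho_{st}}\sum_{s\le w\le t}(-1)^{\rho_{wt}}.
\]
For $s<t$ this polynomial vanishes if and only if the integer sum does; rewriting $(-1)^{\rho_{wt}}=(-1)^{\rho_{st}}(-1)^{\rho_{sw}}$ then turns the kernel axiom into the balance equation
\[
\sum_{s\le w\le t}(-1)^{\rho_{sw}} \;=\; 0 \qquad \text{for every } s<t.
\]

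Finally I would identify this balance condition with the Eulerian property $\mu_{sw}=(-1)^{\rho_{sw}}$. The forward direction is immediate from the defining M\"obius recurrence $\sum_{s\le w\le t}\mu_{sw}=0$. The converse is proved by induction on $\rho_{st}$: the inductive hypothesis lets us replace $\mu_{sw}$ by $(-1)^{\rho_{sw}}$ for every $w<t$ in the recursion $\mu_{st}=-\sum_{s\le w<t}\mu_{sw}$, and the balance equation on $[s,t]$ then yields $\mu_{st}=(-1)^{\rho_{st}}$. The whole argument is essentially routine; the only delicate point is the sign bookkeeping under $\rev$, which produces the factor $(-1)^{\rho_{st}}$ in $\varepsilon^{\rev}$ and, after cancellation of the common $(x-1)^{\rho_{st}}$, translates the kernel axiom into precisely the Eulerian balance condition.
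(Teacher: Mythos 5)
The paper does not prove this proposition; it cites it as Proposition~7.1 of Stanley's paper \cite{stanley-local}, so there is no in-text proof to compare against. That said, your argument is correct and is essentially the standard one (also the one in Stanley's original source). The two preliminary observations ($\varepsilon_{ss}=1$ and $\varepsilon\in\mathcal{I}_\rho(P)$) are fine, and since any incidence-algebra element with invertible diagonal entries has a two-sided inverse, it is indeed enough to verify the one-sided relation $\varepsilon\cdot\varepsilon^{\rev}=\delta$. Your sign computation $\varepsilon^{\rev}_{st}(x)=(-1)^{\rho_{st}}(x-1)^{\rho_{st}}$ is right, the factorization $(\varepsilon\cdot\varepsilon^{\rev})_{st}(x)=(x-1)^{\rho_{st}}\sum_{s\le w\le t}(-1)^{\rho_{wt}}$ correctly exploits $\rho_{sw}+\rho_{wt}=\rho_{st}$, and since $(x-1)^{\rho_{st}}\neq 0$ in $\mathbb{Z}[x]$, the reduction to the balance condition $\sum_{s\le w\le t}(-1)^{\rho_{sw}}=0$ for $s<t$ is valid. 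The equivalence of this balance condition with $\mu_{st}=(-1)^{\rho_{st}}$ via the M\"obius recurrence and induction on $\rho_{st}$ is exactly the classical argument. No gaps.
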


We will refer to $\varepsilon$ as the \emph{Eulerian $P$-kernel}. The left KLS polynomial $g_P(x)$ arising from the $P$-kernel $\varepsilon$ in an Eulerian poset $P$ is called the \emph{toric $g$-polynomial of $P$}. It is not hard to see that the right KLS polynomial $f_P(x)$ arising from $\varepsilon$ equals the toric $g$-polynomial of the dual poset $P^*$. Finally, the \emph{toric $h$-polynomial} of $P$ is defined as
    \[ h_P(x) = \frac{g_P^{\rev}(x) - g_P(x)}{x-1}.\]
More generally, 
\[h_{st}(x) = \frac{g_{st}^{\rev}(x) -g_{st}(x)}{x-1}\] for each $s\leq t$ in $P$.
In Stanley's book \cite[Section~3.16]{stanley-ec1} this polynomial is denoted with the letter $f$, but recall that here we reserve that notation for the right KLS polynomial.  The work of Bayer and Ehrenborg \cite{bayer-ehrenborg} provides a thorough study of toric $g$-polynomials and toric $h$-polynomials of more general (not necessarily Eulerian) posets. We recapitulate their general definition in Section~\ref{sec:bayer-ehr}.

\section{The poset of intervals}\label{sec:three}

Throughout the proof we will assume that $P$ is an Eulerian poset. We set $\widehat{P}:= \Int(P) \cup \left\{ \varnothing \right\}$ the poset of all the closed intervals of $P$ ordered by reverse inclusion (in most sources, including \cite{stanley-ec1}, the empty set is ruled out as a closed interval, but in the present context we need it in order to obtain a bounded poset).

\begin{lemma} \label{lemma:rank-relation}
Let $P$ be an arbitrary graded bounded poset. The poset $\widehat{P}$ is graded, and for any pair of non-empty intervals $[s,t], [u,v] \in \Int(P)$ such that $[s,t] \subseteq [u,v]$, the rank function $\widehat{\rho}$ of $\widehat{P}$ satisfies:
\begin{equation}\label{eq:rank1}
    \widehat{\rho}_{[u,v],[s,t]}= \rho_{us}+ \rho_{tv}.
\end{equation}
In addition, we have that
\begin{equation}\label{eq:rank2}
    \widehat{\rho}_{[s,t], \varnothing}= 1 + \rho_{st}
\end{equation}
\end{lemma}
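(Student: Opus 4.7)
The plan is to set up the rank function of $\widehat{P}$ directly from the gradedness of $P$, via three stages: identify the covering relations, propose a candidate rank function, and verify that it increases by exactly one at every cover.

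The first stage is to enumerate the coverings in $\widehat{P}$. Since the order on $\widehat{P}$ is reverse inclusion, I would check that for two non-empty intervals, $[u,v] \lessdot_{\widehat{P}} [s,t]$ happens exactly when either $v = t$ and $u \lessdot_{P} s \le v$, or $u = s$ and $u \le t \lessdot_{P} v$. Indeed, any strictly shrunk non-empty subinterval of $[u,v]$ must differ from $[u,v]$ in at least one of its endpoints, and if both endpoints move strictly one can always find an intermediate interval. The remaining coverings involve the adjoined top element $\varnothing$: every singleton interval $[w,w]$ is covered by $\varnothing$, and these are the only coverings of $\varnothing$ from below, since any non-singleton interval $[u,v]$ with $u<v$ still admits $[u,u] \subsetneq [u,v]$ as an intermediate non-empty element of $\widehat{P}$.

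For the second stage, I would introduce the candidate
\[\widehat{\rho}([u,v]) := \rho_{\widehat{0}\,u} + \rho_{v\,\widehat{1}} \quad \text{for } [u,v] \ne \varnothing, \qquad \widehat{\rho}(\varnothing) := \rho_{\widehat{0}\,\widehat{1}} + 1,\]
and use the gradedness of $P$ to verify that $\widehat{\rho}$ increases by exactly one across each covering identified in stage one: the two ``shrinking'' coverings each alter a single summand by $1$, and the covering $[w,w] \lessdot \varnothing$ takes $\rho_{\widehat{0}\,w} + \rho_{w\,\widehat{1}} = \rho_{\widehat{0}\,\widehat{1}}$ to $\rho_{\widehat{0}\,\widehat{1}}+1$. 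Since $\widehat{P}$ is bounded and $\widehat{\rho}$ vanishes at the minimum $[\widehat{0},\widehat{1}]$, this will automatically give that $\widehat{P}$ is graded with rank function $\widehat{\rho}$.

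The two formulas in the statement will then follow by direct subtraction. Equation~\eqref{eq:rank1} is immediate from $\widehat{\rho}$ on non-empty intervals, while \eqref{eq:rank2} uses the gradedness identity $\rho_{\widehat{0}\,s} + \rho_{st} + \rho_{t\,\widehat{1}} = \rho_{\widehat{0}\,\widehat{1}}$. The only genuine obstacle I anticipate is bookkeeping in stage one, particularly in singleton and degenerate cases, to make sure that no covering is missed or double-counted; once the covering picture is nailed down, the remainder reduces to arithmetic inside $P$.
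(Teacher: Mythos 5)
Your approach is correct but genuinely different from the paper's. The paper proves gradedness by taking an arbitrary maximal chain of $\widehat{P}$, observing that each step moves exactly one endpoint of an interval by a cover of $P$, and reorganizing these endpoint moves into a maximal chain of $P$ to conclude every maximal chain of $\widehat{P}$ has length $\rho(P)+1$; it then proves equation~\eqref{eq:rank1} by a two-sided bound, exhibiting an explicit chain of length $\rho_{us}+\rho_{tv}$ for the lower bound and using the additivity $\widehat{\rho}_{[u,v],[s,t]}=\widehat{\rho}_{[u,v],[s,v]}+\widehat{\rho}_{[s,v],[s,t]}$ for the upper bound. You instead characterize the cover relation of $\widehat{P}$ outright, exhibit the candidate rank function $\widehat{\rho}([u,v])=\rho_{\widehat{0}u}+\rho_{v\widehat{1}}$ and $\widehat{\rho}(\varnothing)=\rho_{\widehat{0}\widehat{1}}+1$, and verify it increments by one across every cover. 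Your route has the advantage that equations~\eqref{eq:rank1} and \eqref{eq:rank2} then fall out by a single subtraction, with no need for two-sided bounds, while the paper's route avoids stating the full cover characterization (though it implicitly uses it). One small point you glossed over in stage one: besides ruling out moves of both endpoints, you should also note that if a single endpoint moves by more than a cover of $P$ then an intermediate interval interposes; this is easy and parallel to what you wrote, but it is needed to justify the \emph{exactly when} in the cover characterization.
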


\begin{proof}
We first show that $\widehat{P}$ is graded. Pick any maximal chain $\mathcal{C}$ in $\widehat{P}$:
\[
    \mathcal{C} := \varnothing \subsetneq [s_1,t_1] \subsetneq [s_2,t_2] \subsetneq \cdots \subsetneq [s_n,t_n] \subsetneq [s_{n+1},t_{n+1}]=P.
\]
Since $\mathcal{C}$ is unrefinable, we have that $s_1=t_1=: q_1$, and for any $i \geq 1$ we have that

\[
    [s_i,t_i] \subsetneq [s_{i+1},t_{i+1}] \Longrightarrow 
    \begin{cases}         s_i= s_{i+1} \text{ and } t_i \lessdot t_{i+1}  & \text{ , or} \\ 
         t_i=t_{i+1} \text{ and } s_{i+1} \lessdot s_i \text{ .}
        \end{cases}
\]
For each $i$, set $q_{i+1} := t_{i+1}$ if we are in the first case and $q_{i+1} := s_{i+1}$ if we are in the second one. By the definition of the $q_i$ it is not true in general that $q_{i}< q_{i+1}$, but they can be organized in an unrefinable chain of $P$. Thus, up to reordering the $q_i$'s, we have the following maximal chain in $P$:
\[
    \widehat{0}= q_1 < q_2 < q_3 < \cdots < q_n < q_{n+1}= \widehat{1}.
\]
Since $P$ is graded, it follows $n = \rho(P)$, and thus any maximal chain in $\widehat{P}$ has length $\rho(P)+1$. This shows that $\widehat{P}$ is graded. This argument also proves the equality  \eqref{eq:rank2} if we restrict to any interval $[s,t]$.

Now we prove (\ref{eq:rank1}).
    Given two maximal chains $u=u_0 < u_1 < \cdots < u_n=s$ and $t=t_0 < t_1 < \cdots < t_m=v$ with $n=\rho_{us}$ and $m=\rho_{tv}$, we have the following chain of strict inclusions of length $n+m$:
    \begin{equation*}
    [s,t] \subsetneq[u_{n-1},t] \subsetneq\cdots \subsetneq [u_1,t] \subsetneq[u,t] \subsetneq[u, t_1] \subsetneq\cdots \subsetneq [u,t_{m-1}] \subsetneq[u,v],
    \end{equation*}
    from where the inequality $\widehat{\rho}_{[u,v],[s,t]} \geq \rho_{us}+ \rho_{tv}$ follows.

    To prove the reverse inequality, by using the property that $\widehat{\rho}_{[u,v],[s,t]} = \widehat{\rho}_{[u,v],[s,v]} + \widehat{\rho}_{[s,v],[s,t]} $, it suffices to show that $\widehat{\rho}_{[u,v],[s,v]} \leq \rho_{us}$ and $\widehat{\rho}_{[s,v],[s,t]} \leq \rho_{tv} $. For reasons of symmetry, we only deal with the second one, which is given by the fact that given a chain of strict inclusions $[s,t] \subsetneq[s,t_1] \subsetneq\cdots \subsetneq[s,v]$ in $\widehat{P}$ we obtain a chain of strict inequalities $t < t_1 < \cdots < v$ in $P$ of the same length as the chain in $\widehat{P}$. Hence we get  (\ref{eq:rank1}).
\end{proof}

It is a classical result of Lindstr\"om \cite{lindstrom} that if $P$ is Eulerian, then $\widehat{P}$ is also Eulerian. 

\begin{example}
    In Figure~\ref{fig:posets} we depict an Eulerian poset $P$ of rank $3$ and its corresponding poset of intervals $\widehat{P}$. A direct inspection on the M\"obius numbers of $\widehat{P}$ shows that it is Eulerian, as expected by Lindstr\"om's theorem. Furthermore, both $P$ and $\widehat{P}$ are Cohen--Macaulay (and hence Gorenstein*) --- the preservation of the Cohen--Macaulay property in this context was established by Baclawski in \cite[Theorem~7.2]{baclawski}.
    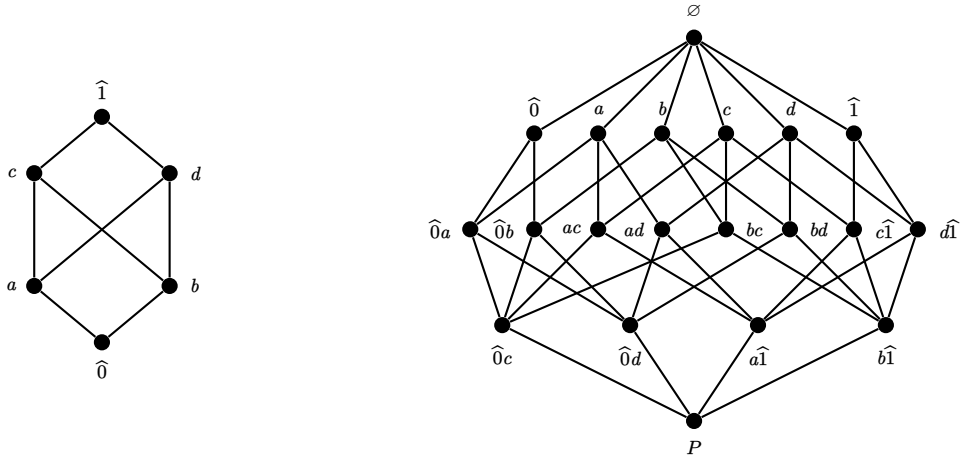
\begin{figure}[ht]
    \begin{tikzpicture}[baseline=(current bounding box.center),
  scale=0.75,auto=center,
  every node/.style={circle,scale=0.8,fill=black,inner sep=2.7pt}]
  \tikzstyle{edges} = [thick];

  \node[label=below:{\footnotesize \itshape $\widehat{0}$}] (cero) at (0,1) {};
  \node[label=left:{\footnotesize \itshape a}]    (a)   at (-1.2,2) {};
  \node[label=right:{\footnotesize \itshape b}]   (b)   at (1.2,2) {};
  \node[label=left:{\footnotesize \itshape c}]    (c)   at (-1.2,4) {};
  \node[label=right:{\footnotesize \itshape d}]   (d)   at (1.2,4) {};
  \node[label=above:{\footnotesize \itshape $\widehat{1}$}]  (uno)  at (0,5) {};

  \draw[edges] (a) -- (cero);
  \draw[edges] (b) -- (cero);
  \draw[edges] (a) -- (c);
  \draw[edges] (b) -- (c);
  \draw[edges] (a) -- (d);
  \draw[edges] (b) -- (d);
  \draw[edges] (d) -- (uno);
  \draw[edges] (c) -- (uno);
\end{tikzpicture}
\hspace{2.5cm}
\begin{tikzpicture}[baseline=(current bounding box.center),
  scale=0.85,auto=center,
  every node/.style={circle,scale=0.8,fill=black,inner sep=2.7pt}]
  \tikzstyle{edges}=[thick];

  \node[label=above:{\footnotesize \itshape $\varnothing$}] (n19) at (0,6) {}; 

  \node[label=above:{\footnotesize \itshape $\widehat{0}$}] (n0)  at (-2.5,4.5) {};  
  \node[label=above:{\footnotesize \itshape a}]   (n6)  at (-1.5,4.5) {};  
  \node[label=above:{\footnotesize \itshape b}]   (n10) at (-0.5,4.5) {};  
  \node[label=above:{\footnotesize \itshape c}]   (n14) at (0.5,4.5)  {};  
  \node[label=above:{\footnotesize \itshape d}]   (n16) at (1.5,4.5)  {};  
  \node[label=above:{\footnotesize \itshape $\widehat{1}$}]  (n18) at (2.5,4.5)  {};  

  \node[label=left:{\footnotesize \itshape $\widehat{0}$a}] (n1)  at (-3.5,3)  {}; 
  \node[label=left:{\footnotesize \itshape $\widehat{0}$b}] (n2)  at (-2.5,3)  {}; 
  \node[label=left:{\footnotesize \itshape ac}]   (n7)  at (-1.5,3)  {}; 
  \node[label=left:{\footnotesize \itshape ad}]   (n8)  at (-0.5,3)  {}; 
  \node[label=right:{\footnotesize \itshape bc}]   (n11) at (0.5,3)   {}; 
  \node[label=right:{\footnotesize \itshape bd}]   (n12) at (1.5,3)   {}; 
  \node[label=right:{\footnotesize \itshape c$\widehat{1}$}]  (n15) at (2.5,3)   {}; 
  \node[label=right:{\footnotesize \itshape d$\widehat{1}$}]  (n17) at (3.5,3)   {}; 

  \node[label=below:{\footnotesize \itshape $\widehat{0}$c}] (n3)  at (-3,1.5)  {}; 
  \node[label=below:{\footnotesize \itshape $\widehat{0}$d}] (n4)  at (-1,1.5)  {}; 
  \node[label=below:{\footnotesize \itshape a$\widehat{1}$}]  (n9)  at (1,1.5)   {}; 
  \node[label=below:{\footnotesize \itshape b$\widehat{1}$}]  (n13) at (3,1.5)   {}; 

  \node[label=below:{\footnotesize \itshape $P$}] (n5)  at (0,0) {};    

  \draw[edges] (n5) -- (n3);
  \draw[edges] (n5) -- (n4);
  \draw[edges] (n5) -- (n9);
  \draw[edges] (n5) -- (n13);

  \draw[edges] (n3) -- (n1);
  \draw[edges] (n3) -- (n2);
  \draw[edges] (n3) -- (n7);
  \draw[edges] (n3) -- (n11);

  \draw[edges] (n4) -- (n1);
  \draw[edges] (n4) -- (n2);
  \draw[edges] (n4) -- (n8);
  \draw[edges] (n4) -- (n12);

  \draw[edges] (n9)  -- (n7);
  \draw[edges] (n9)  -- (n8);
  \draw[edges] (n9)  -- (n15);
  \draw[edges] (n9)  -- (n17);

  \draw[edges] (n13) -- (n11);
  \draw[edges] (n13) -- (n12);
  \draw[edges] (n13) -- (n15);
  \draw[edges] (n13) -- (n17);

  \draw[edges] (n1)  -- (n0);
  \draw[edges] (n1)  -- (n6);
  \draw[edges] (n2)  -- (n0);
  \draw[edges] (n2)  -- (n10);
  \draw[edges] (n7)  -- (n6);
  \draw[edges] (n7)  -- (n14);
  \draw[edges] (n8)  -- (n6);
  \draw[edges] (n8)  -- (n16);
  \draw[edges] (n11) -- (n10);
  \draw[edges] (n11) -- (n14);
  \draw[edges] (n12) -- (n10);
  \draw[edges] (n12) -- (n16);
  \draw[edges] (n15) -- (n14);
  \draw[edges] (n15) -- (n18);
  \draw[edges] (n17) -- (n16);
  \draw[edges] (n17) -- (n18);

  \draw[edges] (n0)  -- (n19);
  \draw[edges] (n6)  -- (n19);
  \draw[edges] (n10) -- (n19);
  \draw[edges] (n14) -- (n19);
  \draw[edges] (n16) -- (n19);
  \draw[edges] (n18) -- (n19);
\end{tikzpicture}\caption{An Eulerian poset $P$ and its poset of intervals $\widehat{P}$.}\label{fig:posets}
\end{figure}
\end{example}

The following is a crucial result, which describes certain values of the left KLS function of $\widehat{P}$ in terms of the left and right KLS functions of $P$. From now on we denote by $\widehat{\rho}$ the rank function of $\widehat{P}$, and by $\widehat{\varepsilon}$ the Eulerian kernel in the incidence algebra of $\widehat{P}$. Similarly $\widehat{g}$ and $\widehat{h}$ will stand for the toric $g$- and toric $h$-functions of $\widehat{P}$.

\begin{theorem} \label{ghatformula}
    For any two non-empty intervals $[s,t] \subseteq [u,v]$ of $P$ we have the following formula:
    \begin{equation} \label{formulaghat}
        \widehat{g}_{[u,v],[s,t]}(x) = g_{us}(x) f_{tv}(x).
    \end{equation}
\end{theorem}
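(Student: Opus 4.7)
The plan is to fix the ambient interval $[u,v]$ and induct on the rank $\widehat{\rho}_{[u,v],[s,t]} = \rho_{us}+\rho_{tv}$ provided by Lemma~\ref{lemma:rank-relation}. The base case $[s,t] = [u,v]$ is immediate from $g_{uu}=f_{vv}=1$. For the inductive step, I would exploit the defining recursion $\widehat{g}^{\rev} = \widehat{g}\cdot\widehat{\varepsilon}$ of Theorem~\ref{thm:kls-functions} at the entry $([u,v],[s,t])$ and isolate the diagonal term $[a,b]=[s,t]$. The pivotal observation is that the closed interval $[[u,v],[s,t]]_{\widehat{P}}$ does not contain the maximum $\varnothing$; by Lemma~\ref{lemma:rank-relation}, its elements are precisely the pairs $[a,b]$ with $u\leq a\leq s$ and $t\leq b\leq v$, and $\widehat{\varepsilon}_{[a,b],[s,t]}(x) = (x-1)^{\rho_{as}+\rho_{tb}}$ splits multiplicatively in $a$ and $b$.

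Substituting the inductive hypothesis $\widehat{g}_{[u,v],[a,b]}(x) = g_{ua}(x)f_{bv}(x)$ for every intermediate $[a,b] \neq [s,t]$, the recursion separates into
\begin{align*}
\widehat{g}^{\rev}_{[u,v],[s,t]}(x) - \widehat{g}_{[u,v],[s,t]}(x) = \Bigl(\sum_{u\leq a\leq s} g_{ua}(x)(x-1)^{\rho_{as}}\Bigr)\Bigl(\sum_{t\leq b\leq v} f_{bv}(x)(x-1)^{\rho_{tb}}\Bigr) - g_{us}(x)f_{tv}(x).
\end{align*}
I would then recognize the two bracketed sums as $(g\varepsilon)_{us}(x) = g^{\rev}_{us}(x)$ and $(\varepsilon f)_{tv}(x) = f^{\rev}_{tv}(x)$, by the defining identities of the left and right KLS functions of the Eulerian kernel $\varepsilon$ on $P$. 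Setting $p(x) := \widehat{g}_{[u,v],[s,t]}(x) - g_{us}(x)f_{tv}(x)$, the displayed identity rearranges to $p^{\rev}(x) = p(x)$ with respect to reversal against the exponent $\rho_{us}+\rho_{tv} = \widehat{\rho}_{[u,v],[s,t]}$, because $x^{\rho_{us}+\rho_{tv}}(g_{us}f_{tv})(x^{-1}) = g^{\rev}_{us}(x)f^{\rev}_{tv}(x)$.

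To conclude, a polynomial satisfying $p^{\rev}=p$ with $\deg p < \widehat{\rho}_{[u,v],[s,t]}/2$ must vanish, so it remains only to verify $\deg\bigl(g_{us}(x)f_{tv}(x)\bigr) < \widehat{\rho}_{[u,v],[s,t]}/2$. Since $[s,t] \subsetneq [u,v]$, at least one of $u\neq s$ or $t\neq v$ holds, and the strict half-degree bound of Theorem~\ref{thm:kls-functions} applies to the corresponding factor ($g_{us}$ or $f_{tv}$), while the other factor contributes at most its non-strict degree bound; adding gives the required strict bound. The strict bound for $\widehat{g}_{[u,v],[s,t]}$ itself is immediate from Theorem~\ref{thm:kls-functions} applied to $\widehat{P}$. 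The main obstacle is mostly bookkeeping: carefully separating the diagonal term from the recursion, confirming that no $\widehat{g}$-value involving $\varnothing$ intervenes, and handling the case split in the degree estimate; once these are in place, the palindromic cancellation closes the induction.
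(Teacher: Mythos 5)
Your proof is correct and follows essentially the same strategy as the paper's: both hinge on the observation that $\widehat{\varepsilon}_{[a,b],[s,t]}$ factors as $(x-1)^{\rho_{as}}(x-1)^{\rho_{tb}}$, so the relevant convolution splits into $g^{\rev}_{us}\,f^{\rev}_{tv} = (g\varepsilon)_{us}(\varepsilon f)_{tv}$, after which the half-degree bound and palindromicity force the proposed formula. The paper packages this by defining a trial element $z\in\mathcal{I}(\widehat{P})$ and invoking the uniqueness clause of Theorem~\ref{thm:kls-functions}, whereas you unwind that uniqueness into an explicit rank induction; if anything, your route is a touch cleaner since the observation that $\varnothing$ never appears in the interval $[[u,v],[s,t]]_{\widehat{P}}$ lets you avoid extending the trial element to the entries $([u,v],\varnothing)$.
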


\begin{proof}
    Let us consider the incidence algebra element $z\in \mathcal{I}(\widehat{P})$ given by 
    \[z_{[u,v],[s,t]}(x)=\begin{cases} 
    g_{us}(x) f_{tv}(x) & \text{if $[s,t]\neq \varnothing$,}\\ \widehat{g}_{[u,v],\varnothing}(x) & \text{otherwise,}\end{cases}\]
    for each pair of closed intervals of $P$.
    
    By Theorem~\ref{thm:kls-functions} and the uniqueness of KLS functions, in order to show the validity of equation~\eqref{formulaghat}, it suffices to show that $z$ satisfies the following three properties:
    \begin{enumerate}[(i')]
        \item\label{it:first} $z_{[s,t],[s,t]} = 1$,
        \item\label{it:second} $\deg{z_{[u,v],[s,t]}(x)} < \frac{1}{2} \widehat{\rho}_{[u,v],[s,t]}$ for any $[s,t] \subsetneq [u,v]$, and
        \item\label{it:third} $z^{\rev} = z\cdot\widehat{\varepsilon}$ in $\mathcal{I}(\widehat{P})$.
    \end{enumerate}
    
    Note that \ref{it:first} is immediate from the fact that $f$ and $g$ are themselves the KLS functions in $P$, because $z_{[s,t],[s,t]} = g_{ss}(x)f_{tt}(x) = 1\cdot 1 = 1$.
    
    On the other hand, similarly, \ref{it:second} follows again from the fact that $f$ and $g$ are the KLS functions in $P$, along with the computation of the rank function of $\widehat{P}$:
    \[
       \deg z_{[u,v],[s,t]}(x) = \deg g_{us}(x) + \deg f_{tv}(x) < {\frac{1}{2}} ( \rho_{us} + \rho_{tv}) = \frac{1}{2}\widehat{\rho}_{[u,v],[s,t]}.
    \]

    We now prove \ref{it:third}:
    \begin{align*}
          z^{\text{rev}}_{[u,v],[s,t]}(x)   &=   x^{\widehat{\rho}_{[u,v],[s,t]}} \cdot z_{[u,v],[s,t]}(x^{-1}) \\
                &=  
         x^{\rho_{us}+ \rho_{tv}} g_{us}(x^{-1}) f_{t v}(x^{-1})  \\
               &= 
         x^{\rho_{us}} g_{us}(x^{-1}) \cdot x^{\rho_{tv}}f_{tv}(x^{-1}) \\
               &= 
         g^{\text{rev}}_{us}(x) f^{\text{rev}}_{tv}(x)  \\
               &= 
         (g \cdot \varepsilon)_{us} (x) ( \varepsilon \cdot f)_{tv}(x)  \intertext{where we applied the definitions of $z$ and the involution $\rev$, and where in the last step we used that $g$ is the left KLS function and $f$ is the right KLS function of $P$,}
               &=  
         \left(\sum_{u \leq w \leq s} {g_{uw}(x) (x-1)^{\rho_{ws}}}\right) 
        \left( \sum_{t \leq y \leq v} {(x-1)^{\rho_{ty}}f_{yv}(x)} \right)  \intertext{where we expanded the last product as a product of convolutions,}
               &= 
         \sum_{[s,t]\subseteq [w,y]\subseteq [u,v]} {g_{uw}(x) f_{yv}(x)} (x-1)^{\rho_{ws}+ \rho_{yv}}  \intertext{after rewriting the sum as a sum over intervals of $P$,}
               &= \sum_{[s,t]\subseteq [w,y]\subseteq [u,v]} {g_{uw}(x) f_{yv}(x)} (x-1)^{\widehat{\rho}_{[w,v],[s,y]}}\\ 
         &=( z \cdot \widehat{\varepsilon})_{[u,v],[s,t]}(x), 
    \end{align*}
    where in the last step we used the definition of $z$ once more. The proof is thus complete.
\end{proof}

Now, having the above formula for the toric $g$-function (that is, the left KLS function) of $\widehat{P}$, we can prove the following result, which directly implies the validity of Theorem~\ref{thm:main} when specialized to the case $[u,v] = [\widehat{0},\widehat{1}]$.

\begin{theorem}
    Let $P$ be an Eulerian poset, and let $\widehat{P}$ be the poset of closed intervals of $P$. Consider the $Z$-function, $Z\in \mathcal{I}(P)$ associated to the $\varepsilon$-kernel in $P$, and the toric $h$-function, $\widehat{h}\in \mathcal{I}(\widehat{P})$ associated to the kernel $\widehat{\varepsilon}$ in $\widehat{P}$. Then, for each pair of elements $u \leq v$ in $P$, we have:
    \begin{equation}
        Z_{uv}(x)= \widehat{h}_{[u,v],\varnothing}(x)
    \end{equation}
    In particular, the $Z$-function at any interval of $P$ equals the toric $h$-function at a principal upper order ideal of $\widehat{P}$.
\end{theorem}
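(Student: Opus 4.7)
The plan is to express $\widehat{h}_{[u,v],\varnothing}$ through the defining relation $\widehat{g}^{\rev} = \widehat{g}\cdot \widehat{\varepsilon}$, invoke Theorem~\ref{ghatformula} to factor $\widehat{g}$, and recognize the resulting sum as the convolution $g\cdot \varepsilon\cdot f$, which equals $Z$.

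As a first step, I would rewrite the defining identity of $\widehat{h}$ as a closed-form sum in terms of $\widehat{g}$. Starting from $\widehat{g}^{\rev} = \widehat{g}\cdot \widehat{\varepsilon}$, subtracting $\widehat{g}$ from both sides, and dividing by $x-1$, the $w = t$ term of the convolution cancels with $\widehat{g}_{st}$ and one obtains
\[\widehat{h}_{st}(x) \;=\; \sum_{s \leq w < t} \widehat{g}_{sw}(x)\,(x-1)^{\widehat{\rho}_{wt}-1}\]
for every $s < t$ in $\widehat{P}$. Specializing to $s = [u,v]$ and $t = \varnothing$, the index $w$ runs precisely over the non-empty intervals $[s,t] \subseteq [u,v]$ of $P$, which is exactly the range where Theorem~\ref{ghatformula} applies.

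Substituting $\widehat{g}_{[u,v],[s,t]}(x) = g_{us}(x)\,f_{tv}(x)$ from \eqref{formulaghat} and $\widehat{\rho}_{[s,t],\varnothing} = 1 + \rho_{st}$ from \eqref{eq:rank2}, the exponent $\widehat{\rho}_{wt}-1$ simplifies to $\rho_{st}$, and the sum collapses to
\[\widehat{h}_{[u,v],\varnothing}(x) \;=\; \sum_{u\leq s\leq t\leq v} g_{us}(x)\,(x-1)^{\rho_{st}}\,f_{tv}(x) \;=\; (g\cdot \varepsilon \cdot f)_{uv}(x).\]
One final invocation of Definition~\ref{def:zeta-functions}, combined with the identity $f^{\rev} = \varepsilon \cdot f$ coming from property \ref{it:f-iii} in Theorem~\ref{thm:kls-functions}, rewrites the right-hand side as $(g \cdot f^{\rev})_{uv}(x) = Z_{uv}(x)$.

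The genuinely substantial step, namely the product formula $\widehat{g}_{[u,v],[s,t]} = g_{us}\,f_{tv}$, was already established in Theorem~\ref{ghatformula}; what remains is bookkeeping. The only point to verify carefully is that the empty-interval does not create trouble: this is automatic, since the $w=t=\varnothing$ term is dropped when forming $(\widehat{g}^{\rev} - \widehat{g})/(x-1)$, and every remaining index $w$ corresponds to a non-empty interval, where \eqref{formulaghat} is valid.
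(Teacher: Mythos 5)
Your proof is correct and follows essentially the same route as the paper's: both arguments hinge on Theorem~\ref{ghatformula}, the rank identity~\eqref{eq:rank2}, and the recognition of the intermediate sum as the convolution $g\cdot\varepsilon\cdot f$. The only cosmetic difference is the direction of travel---you unwind $\widehat{h}_{[u,v],\varnothing}$ down to $Z_{uv}$, while the paper expands $Z_{uv}$ up to $\widehat{h}_{[u,v],\varnothing}$.
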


\begin{proof}
    We have:
     \begin{align*}
          Z_{uv}(x)    &=     (g^{\text{rev}} \cdot f)_{uv}  \\
                &=  
          \sum_{u\leq t \leq v} { g^{\text{rev}}_{ut}(x) f_{tv}(x) },  \intertext{by definition of $Z$-function and convolution in $\mathcal{I}(P)$,}
                &=  
          \sum_{u\leq t \leq v} \left(  \sum_{u \leq s \leq t} g_{us}(x) (x-1)^{\rho_{st}}    \right) f_{tv}(x)  \intertext{because $g^{\rev} = g\cdot \varepsilon$ is the left KLS function associated to the kernel $\varepsilon$}
                &=  
          \sum_{\substack{[s,t]\subseteq [u,v]\\ [s,t]\neq \varnothing}} { g_{us}(x) f_{tv}(x) (x-1)^{\rho_{st}}   },\intertext{rewriting the sums in order to transform it to a convolution in $\mathcal{I}(\widehat{P})$ (note that the case $[s,t] = \varnothing$ may never happen)}
          &=
          \sum_{\substack{[s,t]\subseteq [u,v]\\ [s,t]\neq \varnothing}}  
          \widehat{g}_{[u,v],[s,t]}(x)\, 
          (x-1)^{\widehat{\rho}_{[s,t],\varnothing} - 1},     \intertext{where we employed Theorem~\ref{ghatformula} together with equation~\eqref{eq:rank2},}
          &= (x-1)^{-1} \sum_{\substack{[s,t]\subseteq [u,v]\\ [s,t]\neq \varnothing}}   { \widehat{g}_{[u,v],[s,t]}(x)\, (x-1)^{\widehat{\rho}_{[s,t],\varnothing}}   } \\
          &= (x-1)^{-1} \left(\left( \widehat{g} \cdot \widehat{\varepsilon}\right)_{[u,v],\varnothing}(x) - \widehat{g}_{[u,v],\varnothing}(x)\right), \intertext{by adding and subtracting the missing term to write the second factor using a convolution,}
          &= (x-1)^{-1}  \left(\widehat{g}^{\rev}_{[u,v],\varnothing}(x) - \widehat{g}_{[u,v],\varnothing}(x)\right),\intertext{by using that $\widehat{g}$ is the left KLS function associated to the kernel $\widehat{\varepsilon}$ in $\widehat{P}$}
          &= \widehat{h}_{[u,v],\varnothing}(x),
     \end{align*}
     where we used the definition of toric $h$-function in $\widehat{P}$. The proof is complete. 
\end{proof}

\section{Chow polynomials and Veronese transforms}\label{sec:chow}

If $P$ is Eulerian, the main construction by Ferroni, Matherne, and Vecchi in \cite{ferroni-matherne-vecchi} associates to the kernel $\varepsilon$ a special element $\H\in \mathcal{I}(P)$ as follows. First, define $\overline{\varepsilon}\in \mathcal{I}_{\rho}(P)$ as
    \[ \overline{\varepsilon}_{st}(x) = \begin{cases} (x-1)^{\rho_{st} - 1} & s < t,\\ -1 & s = t,\end{cases}\]
and consider $\H := -\left(\overline{\varepsilon}\right)^{-1}$.
This element is called the \emph{Chow function} and enjoys many interesting properties. We refer to \cite{ferroni-matherne-vecchi} for the precise definitions.

In \cite[Theorem~5.4]{ferroni-matherne-vecchi} it is proved that the Chow polynomial of an Eulerian poset $P$ equals the $h$-polynomial of $\Delta(P)$, the order complex of $P$. As we will see now, the enumeration of chains of $P$ according to size (that is, the $f$-vector of $\Delta(P)$) fully determines the $f$-vector of $\Delta(\widehat{P})$. Therefore, the Chow polynomial of $P$ fully determines the Chow polynomial of $\widehat{P}$, we briefly introduce the notion of Veronese transforms. 

\begin{definition}
    Let $\{h_i\}_{i=0}^r$ be a sequence of real numbers, let $d\geq r$, and consider the generating function:
        \begin{equation} \label{eq:gen-fun}
        \sum_{n=0}^{\infty}a_n\, x^n = \frac{h_r\, x^r + h_{r-1}\,x^{r-1}+\cdots + h_0}{(1-x)^{d+1}}.
        \end{equation}
    The \emph{$m$-th Veronese transform} of the sequence $\{h_i\}_{i=0}^r$ (with respect to degree $d$) is the unique sequence $h'_0,\ldots,h'_{d}$ satisfying:
    \[ \sum_{n=0}^{\infty}a_{mn}\, x^n = \frac{h'_d\, x^{d} + h'_{d-1}\,x^{d-1}+\cdots + h'_0}{(1-x)^{d+1}}.\]
\end{definition}

It is a standard fact on generating functions that, as long as the numerator on the right-hand-side in equation \eqref{eq:gen-fun} does not vanish at $x=1$, then the sequence $\{a_n\}_{n\geq 0}$ is interpolated by a polynomial of degree exactly $d$. In other words, there exists a degree $d$ polynomial $p(x)\in \mathbb{R}[x]$ such that $a_n = p(n)$ for each $n\in \mathbb{Z}_{\geq 0}$. This observation implies that the sequence $\{h'_i\}_{i=0}^d$ is enough to reconstruct the sequence $\{h_i\}_{i=0}^r$.

\begin{theorem}
    Let $P$ be an Eulerian poset of rank $r$. Then, the Chow polynomial of $\widehat{P}$ equals the second Veronese transform of the Chow polynomial of $P$ (with respect to the degree $d=r$). In particular, the Chow polynomial of $P$ determines the Chow polynomial of $\widehat{P}$ and viceversa.
\end{theorem}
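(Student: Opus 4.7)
The plan is to combine the interpretation of the Chow polynomial as the $h$-polynomial of the order complex from \cite[Theorem~5.4]{ferroni-matherne-vecchi} with an explicit combinatorial identity between multichain counts in $P$ and in $\widehat{P}$. For an Eulerian poset $Q$ of rank $s$, let $Z(Q,m)$ count the multichains $\widehat{0}=y_0\leq y_1\leq\cdots\leq y_m=\widehat{1}$. The standard $f$-to-$h$ rewriting, applied to $\Delta(Q)$, turns the identity $\mathrm{Chow}_Q=h_{\Delta(Q)}$ into the generating-function characterization
\[
\sum_{n\geq 0} Z(Q,n+1)\, t^n \;=\; \frac{\mathrm{Chow}_Q(t)}{(1-t)^{s+1}}.
\]
Applying this to $P$ (of rank $r$) and to $\widehat{P}$ (of rank $r+1$, which is Eulerian by Lindstr\"om's theorem), and combining with the definition of the second Veronese transform with respect to $d=r$, reduces the theorem to the identity
\[
\sum_{n\geq 0} Z(\widehat{P},n+1)\, t^n \;=\; \frac{1}{1-t}\sum_{k\geq 0} Z(P,2k+1)\, t^k.
\]

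The combinatorial heart of the proof is the numerical identity
\[
Z(\widehat{P},m) \;=\; \sum_{k=0}^{m-1} Z(P,2k+1) \qquad\text{for all } m\geq 1,
\]
which I would establish by an explicit bijection. Given a multichain $P=I_0\supseteq I_1\supseteq\cdots\supseteq I_m=\varnothing$ in $\widehat{P}$, let $j^*=\min\{j:I_j=\varnothing\}$; since $\varnothing$ is the maximum, the multichain is determined by its non-empty prefix $P=I_0\supseteq\cdots\supseteq I_{j^*-1}$. Writing $I_j=[u_j,v_j]$, the nesting conditions translate into two weakly monotone sequences $\widehat{0}=u_0\leq u_1\leq\cdots\leq u_{j^*-1}$ and $\widehat{1}=v_0\geq v_1\geq\cdots\geq v_{j^*-1}$ with $u_j\leq v_j$ for all $j$ (which is automatic for $j<j^*-1$ via $u_j\leq u_{j^*-1}\leq v_{j^*-1}\leq v_j$). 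Concatenating at the common bound yields the multichain
\[
\widehat{0}=u_0\leq\cdots\leq u_{j^*-1}\leq v_{j^*-1}\leq\cdots\leq v_0=\widehat{1}
\]
of length $2j^*-1$ in $P$. The inverse splits a length-$(2j^*-1)$ multichain at the midpoint (setting $u_j=w_j$ and $v_j=w_{2j^*-1-j}$), so stratifying by $j^*\in\{1,\ldots,m\}$ and setting $k=j^*-1$ produces the identity.

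To finish, substitute the combinatorial identity into the generating function and interchange the order of summation to recover the displayed reduction. Writing the left-hand side as $\mathrm{Chow}_{\widehat{P}}(t)/(1-t)^{r+2}$, and writing the right-hand side as
\[
\frac{1}{1-t}\cdot\frac{\mathrm{Chow}_P^{(2)}(t)}{(1-t)^{r+1}} \;=\; \frac{\mathrm{Chow}_P^{(2)}(t)}{(1-t)^{r+2}}
\]
(after applying the definition of the second Veronese transform to $\sum_k Z(P,2k+1)\, t^k$), the common denominator forces $\mathrm{Chow}_{\widehat{P}}(t)=\mathrm{Chow}_P^{(2)}(t)$ as polynomials. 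The ``in particular'' clause is then immediate from the invertibility of the Veronese transform, as observed immediately before the theorem statement.

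The main obstacle is a clean verification of the bijection, especially making sure the intermediate inequalities $u_j\leq v_j$ are genuinely automatic and that the extremal cases are handled (notably $j^*=1$, where the $\widehat{P}$-multichain degenerates to $P\supseteq\varnothing$ and corresponds to the single length-$1$ multichain $\widehat{0}\leq\widehat{1}$ in $P$). Once this bookkeeping is in place, the remainder of the argument is a formal manipulation of generating functions together with the characterizations of the Chow polynomial and of the Veronese transform.
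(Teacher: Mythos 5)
Your proposal is correct and takes essentially the same approach as the paper: both reduce the theorem to the identity $\zeta_{\widehat{P}}(n)=\sum_{j=0}^{n-1}\zeta_P(2j+1)$ (your $Z(\widehat{P},m)=\sum_{k=0}^{m-1}Z(P,2k+1)$) by stratifying a multichain of intervals in $\widehat{P}$ according to how many of its terms equal $\varnothing$ and folding the non-empty part into a multichain of endpoints in $P$, and both then feed this into the generating-function identity $\sum_{n\geq 0}\zeta_Q(n+1)x^n=\H_Q(x)/(1-x)^{\rk(Q)+1}$ from \cite[Theorem~5.4]{ferroni-matherne-vecchi} together with the definition of the Veronese transform. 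The only difference is presentational: you work out the bijection (and the edge case $j^*=1$) in more detail than the paper, which simply records the correspondence.
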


\begin{proof}
    Recall that the zeta polynomial (not to be confused with the $Z$-polynomial of Proudfoot) of a poset $Q$ enumerates multichains in $P$ according to their size (see~\cite[Section~3.12]{stanley-ec1}). We use the notation $\zeta_Q(n)$ for the zeta polynomial in order to avoid confusions with the $Z$-polynomial. Precisely, $\zeta_Q(n)$ enumerates multichains of elements in $Q$ of size $n-1$.
    
    Because of how $\widehat{P}$ is defined, $\zeta_{\widehat{P}}(n)$ enumerates multichains of intervals of $P$. These can have the form
        \[ [s_1,t_1] \subseteq [s_2,t_2] \subseteq \cdots \subseteq [s_{n-1},t_{n-1}].\]
    However, we allow some of the intervals to be the empty set. If $[s_1,t_1] = \cdots = [s_j,t_j] = \varnothing$ for $j\geq 0$, this is equivalent to enumerating multichains of $2(n-j-1)$ elements in $P$:
        \[ s_{n-1} \leq s_{n-2} \leq \cdots \leq s_{j+1} \leq t_{j+1} \leq \cdots \leq t_{n-2} \leq t_{n-1}.\]
    This gives the relationship:
    \begin{equation}
    \zeta_{\widehat{P}}(n) = \sum_{j=0}^{n-1} \zeta_P(2(n-j-1)+1) = \sum_{j=0}^{n-1} \zeta_P(2j+1).
    \end{equation}
    In \cite[Proof of Theorem~5.4]{ferroni-matherne-vecchi} it is proved that the following relation holds for any Eulerian poset $Q$:
    \begin{equation}\label{eq:zeta-poset-to-chow}  \sum_{n=0}^{\infty} \zeta_Q(n+1)\, x^{n} = \frac{\H_Q(x)}{(1-x)^{\rk(Q)+1}}.
    \end{equation}
    Applying this for $Q = \widehat{P}$ (which has rank $r+1 = \rk(P) + 1$) results in:
    \[ \frac{\H_{\widehat{P}}(x)}{(1-x)^{r+2}} = \sum_{n=0}^{\infty} \zeta_{\widehat{P}}(n+1)\, x^{n} = \sum_{n=0}^{\infty} \sum_{j=0}^{n} \zeta_P(2j+1)\, x^{n} = \frac{1}{1-x} \sum_{n=0}^{\infty} \zeta_P(2n+1)\, x^n.\]
    Equating both extremes in the last equation and using that $\zeta_P(0) = 0$ yields:
    \[\sum_{n=0}^{\infty} \zeta_P(2n+1)\, x^n = \frac{\H_{\widehat{P}}(x)}{(1-x)^{r+1}}.\]
    By applying again equation~\eqref{eq:zeta-poset-to-chow} for $Q=P$, we have that $\H_{\widehat{P}}(x)$ is the second Veronese transform of $\H_{P}(x)$, as claimed.
\end{proof}

\begin{example}
    Let $P$ be the Boolean lattice of rank $r$. The Chow polynomial of $P$ is an Eulerian polynomial. The poset $\widehat{P}$ is the face lattice of the $(r+1)$-dimensional cross-polytope. The Chow polynomial of $\widehat{P}$ is the type B Eulerian polynomial. It is a well-known fact that the second Veronese transform of the Eulerian polynomials agrees indeed with the type B Eulerian polynomials.
\end{example}

\begin{remark}
    A result of Hetyei \cite[Proposition~12]{hetyei} implies that the (deconed) order complex of the poset of intervals of $\widehat{P}$ is a Tchebysheff triangulation of the first kind of the suspension of the (deconed) order complex of $P$. In particular, by \cite[Theorem~6]{hetyei} one can write down, in a different language, the relationship that relates $\H_P(x)$ and $\H_{\widehat{P}}(x)$ using the so-called Tchebysheff polynomials of the first kind.
\end{remark}

\begin{remark}
    A long-standing conjecture by Brenti and Welker in \cite{brenti-welker} asserts (in an equivalent form) that the Chow polynomial of the face poset of a polytope is real-rooted. A stronger version, suggested in the work by Athanasiadis and collaborators \cite{athanasiadis-tzanaki,athanasiadis-kalampogia}, suggests that the Chow polynomial of a Gorenstein* poset is real-rooted. See the discussion in \cite[Section~5.2]{ferroni-matherne-vecchi}.
    
    By combining the preceding theorem with a result by Anwar and Nazir \cite[Theorem~4.1]{anwar-nazir}, we have that the nonnegativity\footnote{It is still not known whether for all Eulerian posets the Chow polynomial has nonnegative coefficients. See \cite[Question~5.5]{ferroni-matherne-vecchi}.} of the coefficients of $\H_P(x)$ automatically implies the nonnegativity and real-rootedness of $\H_{\widehat{P}}(x)$. In other words, it follows that if $P$ is Gorenstein* then $\H_{\widehat{P}}(x)$ is real-rooted. This provides some (modest) evidence in favor of \cite[Conjecture~1.8]{ferroni-matherne-vecchi}.
\end{remark}

\section{Final remarks and open questions}\label{sec:five}

\subsection{Polytopes and geometric antiprisms}\label{sec:polytopes}

Whether the poset of intervals of the face poset of a polytope is itself the face poset of a polytope was considered as an open question in Gr\"unbaum's book \cite[p.~66]{grunbaum}, and was explicitly proposed as a problem by Lindstr\"om in \cite{lindstrom-problem}. Only in recent years a counterexample was found by Dobbins \cite{dobbins}.

On the positive side, however, a result by Bj\"orner \cite[Theorem~6.1]{bjorner} ensures that if $P$ is the face poset of a convex polytope, then the poset of intervals of $P$ is the face poset of a regular CW-sphere whose boundary complex is polyhedral and shellable. In particular, proving a desirable property of toric $h$-polynomials of such kind of spheres yields desirable properties for $Z$-polynomials of polytopes.

It is a well-known fact that rational polytopes do not exhaust all combinatorial types of polytopes. However, within this special class of posets (that is face posets of rational polytopes) it is not known whether the operation of taking the poset of intervals produces another element in the class. 

Furthermore, Theorem~\ref{thm:main-for-polytopes} does not preclude the possibility that for an arbitrary rational polytope $\mathcal{P}$ there exists a non-projective compact toric variety whose intersection cohomology Betti numbers yield the coefficients of the $Z$-polynomial of the face poset of $\mathcal{P}$. We leave this as an open question.

\begin{question}
    Let $P$ be the face poset of a rational polytope. Is the $Z$-polynomial of $P$ the intersection cohomology Poincar\'e polynomial of some (not necessarily projective) compact toric variety?
\end{question}

\subsection{Failure for more general posets}\label{sec:bayer-ehr}

In a classical paper by Bayer and Ehrenborg \cite{bayer-cd}, a more general notion of toric $h$-polynomial is defined for posets that are not necessarily Eulerian. It is convenient to state their definition using the terminology of incidence algebras. They define two elements $g,h\in \mathcal{I}(P)$ for an arbitrary graded poset $P$ as follows. First, they set as base cases $h_{ss}(x) = g_{ss}(x) = 1$ for every $s\in P$, and then the following double recursion for the remaining intervals:
    \begin{align*}
        h_{st}(x) &= (x-1)^{\rho_{st}} + \sum_{s < w < t} g_{sw}\, (x-1)^{\rho_{wt} - 1}\\
        g_{st}(x) &= U_{\leq \lfloor \frac{n}{2}\rfloor}\left((1-x) h_{st}(x)\right)
    \end{align*}
where $U_{\leq m}$ denotes the linear operator that truncates a polynomial to degree $\leq m$. It is not hard to see that this gives two well-defined objects that match our versions of the polynomials for Eulerian posets. It may be tempting to define an analog of the right KLS function (which we denoted as $f$) by considering 
    \[ f_{st}(x) = g^*_{ts}(x)\]
where $g^*_{ts}$ denotes the Bayer--Ehrenborg toric $g$-polynomial of the interval $[t,s]$ in the dual poset $P^*$. Once this is done, a natural definition for the $Z$-function would be: 
    \[ Z_{st}(x) = \sum_{s\leq w\leq t} x^{\rho_{sw}}\, g_{sw}(x^{-1})\, f_{wt}(x).\]
Which of course agrees with Proudfoot's definition when $P$ is Eulerian. Unfortunately, Theorem~\ref{thm:main} is not true for arbitrary graded posets using these definitions. A direct computation with small graded non-Eulerian posets quickly yields several counterexamples.

\subsection{A relation with the \textbf{cd}-index}

Every Eulerian poset can be associated a special bivariate polynomial $\Phi\in \mathbb{Z}\left<c,d\right>$ in two non-commuting variables $c$ and $d$, often called the \emph{cd-index}. We refer to \cite{bayer-cd} for a detailed reading on the $cd$-index of Eulerian posets. A result of Bayer and Ehrenborg \cite[Theorem~4.2]{bayer-ehrenborg} allows for the computation of the toric $h$-polynomial of an Eulerian poset $P$ as a specialization of the $cd$-index of $P$. Furthermore, by a result by Joji{\'c} \cite{jojic}, the $cd$-index of the poset of intervals of $P$ is itself a (nice) specialization of the $cd$-index of $P$. Combining these two results with our Theorem~\ref{thm:main} implies that one may write down an explicit (but fairly complicated) specialization of the $cd$-index of $P$ that retrieves the $Z$-polynomial.

\subsection{Some open questions}

If $P$ is a Gorenstein* poset, a subtle result of Karu about the $cd$-index \cite{karu} implies that the (Eulerian) Chow polynomial of $P$ is $\gamma$-positive (see \cite[Theorem~5.7]{ferroni-matherne-vecchi}). It is natural to ask if the $Z$-polynomial of a Gorenstein* poset enjoys the same property. Unfortunately not. In fact, being Gorenstein* is not enough to even guarantee the nonnegativity of the $Z$-polynomial nor the $h$-polynomial. For example, the poset in Figure~\ref{fig:posets2} is Gorenstein* yet we have:
    \begin{align*}
        h_P(x) &= x^3 - x^2 - x + 1,\\
        Z_P(x) &= x^4 - 2x^2 + 1.
    \end{align*}
However, the situation appears to be drastically different when the poset $P$ is a lattice in addition to being Gorenstein*. In fact, a long-standing (and still open) conjecture by Stanley \cite[Conjecture~4.2(d)]{stanley-toric-g} postulates that the toric $g$-polynomial of a Gorenstein* lattice is nonnegative, which is equivalent to asserting that the toric $h$-polynomial of such posets is unimodal. In particular, since $P$ being a Gorenstein* lattice implies $\widehat{P}$ being a Gorenstein* lattice, this conjecture would imply that $Z_P(x)$ should also be unimodal for all Gorenstein* lattices. However, computer experiments led us to consider the following question.

\begin{question}
    Are the $Z$-polynomials of Gorenstein* lattices $\gamma$-positive?
\end{question}

We have one additional reason to formulate the above question. The analogue of this statement for $Z$-polynomials of loopless matroids was proved by Ferroni, Matherne, Stevens, and Vecchi in \cite[Theorem~4.7]{ferroni-matherne-stevens-vecchi}.

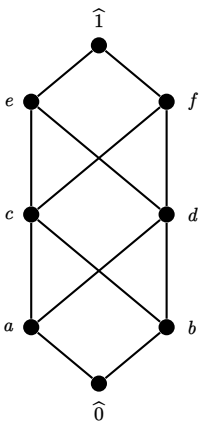
\begin{figure}[ht]
    \begin{tikzpicture}[baseline=(current bounding box.center),
  scale=0.75,auto=center,
  every node/.style={circle,scale=0.8,fill=black,inner sep=2.7pt}]
  \tikzstyle{edges} = [thick];
  \node[label=below:{\footnotesize \itshape $\widehat{0}$}] (cero) at (0,1) {};
  \node[label=left:{\footnotesize \itshape a}]    (a)   at (-1.2,2) {};
  \node[label=right:{\footnotesize \itshape b}]   (b)   at (1.2,2) {};
  \node[label=left:{\footnotesize \itshape c}]    (c)   at (-1.2,4) {};
  \node[label=right:{\footnotesize \itshape d}]   (d)   at (1.2,4) {};
  \node[label=left:{\footnotesize \itshape e}]    (e)   at (-1.2,6) {};
  \node[label=right:{\footnotesize \itshape f}]   (f)   at (1.2,6) {};
  \node[label=above:{\footnotesize \itshape $\widehat{1}$}]  (uno)  at (0,7) {};

  \draw[edges] (a) -- (cero);
  \draw[edges] (b) -- (cero);
  \draw[edges] (a) -- (c);
  \draw[edges] (b) -- (c);
  \draw[edges] (a) -- (d);
  \draw[edges] (b) -- (d);
  \draw[edges] (d) -- (e);
  \draw[edges] (d) -- (f);
  \draw[edges] (c) -- (e);
  \draw[edges] (c) -- (f);
  \draw[edges] (e) -- (uno);
  \draw[edges] (f) -- (uno);
\end{tikzpicture}\caption{A Gorenstein* poset with bad toric $h$- and $Z$-polynomials}\label{fig:posets2}
\end{figure}
Note that the example of Figure~\ref{fig:posets2} implies that an affirmative answer for the above question cannot simply come from the nonnegativity of the $cd$-index proved by Karu for Gorenstein* posets. In other words, even though it is possible to write a specialization of the $cd$-index that retrieves the $Z$-polynomial, such specialization \emph{cannot} be positive, and less so for the $\gamma$-polynomial associated to $Z$.

\section*{Acknowledgments}

We thank Michele D'Adderio, Matt Larson, and Alan Stapledon for several conversations around the themes in this paper. We are especially grateful to Nick Proudfoot for motivating this research and for many suggestions and comments that improved the quality of this paper.

\bibliographystyle{amsalpha}
\bibliography{bibliography}
\end{document}